\theoremstyle{plain}
\newtheorem{theorem}{Theorem}[section]
\newtheorem{proposition}[theorem]{Proposition}
\newtheorem{lemma}[theorem]{Lemma}
\newtheorem{corollary}[theorem]{Corollary}
\theoremstyle{nonumberplain}
\newtheorem{proof}{Proof}
\numberwithin{equation}{section}
\newenvironment{smallpmatrix}{\left(\begin{smallmatrix}}{\end{smallmatrix}\right)}
\newcommand{\LC}{\nabla}
\newcommand{\Nt}{\widetilde\nabla}
\newcommand{\Rt}{\widetilde R}
\newcommand{\T}{\mathscr{T}}
\DeclareMathOperator{\ad}{ad}
\DeclareMathOperator{\diag}{diag}
\DeclareMathOperator{\End}{End}
\DeclareMathOperator{\Id}{Id}
\newcommand{\norm}[1]{\lVert #1 \rVert}
\newcommand{\Lie}[1]{\operatorname{\textsl{#1}}}
\newcommand{\lie}[1]{\operatorname{\mathfrak{#1}}}
\newcommand{\hol}{\lie{hol}}
\newcommand{\Ort}{\Lie{O}}
\newcommand{\SO}{\Lie{SO}}
\newcommand{\so}{\lie{so}}
\newcommand{\la}{\lie{a}}
\newcommand{\f}{\lie{f}}
\newcommand{\g}{\lie{g}}
\newcommand{\tg}{\widetilde\g}
\newcommand{\h}{\lie{h}}
\newcommand{\lk}{\lie{k}}
\newcommand{\m}{\lie{m}}
\newcommand{\n}{\lie{n}}
\newcommand{\ls}{\lie{s}}
\newcommand{\bR}{{\mathbb R}}
\DeclareMathOperator{\RH}{\bR H}
\newcommand{\eqbreak}{\\&\qquad}
\newcommand{\mythanks}{\xdef\@thefnmark{}\@footnotetext}
\begin{document}
\thispagestyle{empty}

\begin{center}
  \LARGE\bfseries The homogeneous geometries of real hyperbolic space
\end{center}
\begin{center}
  \Large M. Castrillón López, P. M. Gadea and A. F. Swann
\end{center}

\mythanks{Partially supported by MICINN, Spain, under grant
MTM2011-22528 and by the Danish Council for Independent Research,
Natural Sciences}

%\mythanks{Last revised: \svntoday{} (\svnrev)}

\begin{abstract}
  We describe the holonomy algebras of all canonical connections of
  homogeneous structures on real hyperbolic spaces in all dimensions.
  The structural results obtained then lead to a determination of the
  types, in the sense of Tricerri and Vanhecke, of the corresponding
  homogeneous tensors.  We use our analysis to show that the moduli
  space of homogeneous structures on real hyperbolic space has two
  connected components.
\end{abstract}

\bigskip
\begin{center}
  \begin{minipage}{0.8\linewidth}
    \begin{small}
      \tableofcontents
    \end{small}
  \end{minipage}
\end{center}

\bigskip
\begin{small}\noindent
  2010 Mathematics Subject Classification: Primary 53C30; Secondary
  57S25, 58H15
\end{small}

\section{Introduction}
\label{sec:introduction}

Homogeneous manifolds provide a rich and varied class of spaces on
which to study Riemannian geometry.  One difficulty that arises is
that the same Riemannian manifold \( (M,g) \) can admit several
different descriptions as a homogeneous space~\( G/H \).  It is
surprising how little is known this problem for many well-known
spaces.

A substantial attempt to solve this problem was made by
\textcite{Ambrose-S:homogeneous}.  They characterised the property
that \( (M,g) \) is homogeneous in terms of the existence of a tensor
\( S \) satisfying a certain set of non-linear differential equations.
Each homogeneous description of \( (M,g) \) gives rise to a different
solution to these equations.  These equations were studied further by
\textcite{Tricerri-V:homogeneous}, who introduced a decomposition of
\( S \) into components under the action of the orthogonal group, and
produced a number of examples illustrating the occurrence of different
possible classes.  In particular, they showed that in dimension \( 3
\), the real hyperbolic space \( \RH(3) \) admits homogeneous tensors
of two different types.  However, they left as an open problem, the
determination of all homogeneous tensors on \( \RH(n) \) for \(
n\geqslant 3 \) \cite[p.~55]{Tricerri-V:homogeneous}.

In \cite{Castrillon-Lopez--GS:RHn}, we took a different route and used
general results of \textcite{Witte:cocompact} on co-compact subgroups
to determine all the groups acting transitively on~\( \RH(n) \).  This
left open the determination of the corresponding homogeneous
tensors~\( S \) and their types.  Any homogeneous space \( G/H \) with
a Lie algebra decomposition \( \g = \h + \m \) carries a canonical
connection~\( \Nt \), characterised by the property that \( G
\)-invariant tensors are parallel for~\( \Nt \).  By work of
Nomizu~\cite{Nomizu:affine}, the tensor \( S \) depends only on the
holonomy algebra~\( \hol \leqslant \h \) of~\( \Nt \) and \( \hol + \m
\) determines the Lie algebra of a smaller group acting transitively
on \( G/H \).

In this paper we answer two questions: what are the holonomy algebras
of the canonical connections on \( \RH(n) \)? and what are the types
of the corresponding homogeneous tensors?  Regarding a geometric
structure as being given by a collection of tensors that are parallel
with respect to some connection, the answer to the first question thus
determines which geometric structures may be realised homogeneously on
\( \RH(n) \).  Our answer to the first question is given by:

\begin{theorem}
  \label{thm:holm}
  The holonomy algebras of canonical connections on \( \RH(n) \) are
  \( \so(n) \) and all the reductive algebras
  \begin{equation*}
    \lk = \lk_0 + \lk_{ss} 
  \end{equation*}
  of compact type with \( \lk_0 \cong \bR^r \) Abelian and \( \lk_{ss}
  \) semi-simple such that
  \begin{equation*}
    3r + \dim\lk_{ss} \leqslant n-1.
  \end{equation*}
\end{theorem}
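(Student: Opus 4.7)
My plan combines Nomizu's reduction principle with the classification of transitive actions on $\RH(n)$ from \cite{Castrillon-Lopez--GS:RHn}. Nomizu's theorem states that if $\Nt$ is the canonical connection of $\RH(n) = G/H$ with reductive decomposition $\g = \h + \m$ and holonomy algebra $\hol \leqslant \h$, then $\tg := \hol + \m$ is a Lie subalgebra of $\g$, its connected subgroup acts transitively on $\RH(n)$, and its own canonical connection is again $\Nt$. After this reduction I may assume $\hol = \h$, so the task becomes the classification of reductive pairs $(\tg, \h, \m)$ inside $\so(n,1)$ realising a transitive action with canonical holonomy equal to the full isotropy.

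By \cite{Castrillon-Lopez--GS:RHn}, up to conjugation the transitive subgroups are either $\SO(n,1)_0$ or semidirect products $L \ltimes AN$, where $AN$ is the solvable Iwasawa factor and $L \leqslant M = \SO(n-1)$ is a closed subgroup. The first case yields the Levi-Civita connection with holonomy $\so(n)$, accounting for the first item of the statement. For the second, the isotropy $\h = \lie{l}$ lies inside $\so(n-1)$ and is therefore reductive of compact type, so it decomposes as $\lie{l} = \lk_0 + \lk_{ss}$ with Abelian centre $\lk_0 \cong \bR^r$ and semi-simple part $\lk_{ss}$. The $\ad(L)$-invariant complements to $\lie{l}$ in $\g = \lie{l} + \la + \n$ are parametrised by $\ad(L)$-equivariant linear maps $\phi \colon \la + \n \to \lie{l}$, via $\m_\phi = \{X + \phi(X) : X \in \la + \n\}$.

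For each $\phi$, a short computation using the abelianness of $\n$ and $[\la,\n] \subseteq \n$ shows that the $\lie{l}$-component of $[X+\phi(X), Y+\phi(Y)]$ equals $[\phi(X), \phi(Y)] - \phi([X,Y] + [\phi(X), Y] + [X, \phi(Y)])$, and the holonomy $\hol_\phi$ is the Lie subalgebra of $\lie{l}$ generated by these expressions. Imposing $\hol_\phi = \lie{l}$ places strong structural constraints on the pair $(L, \phi)$, which I would analyse by decomposing $\la + \n$ into $L$-isotypic components and exploiting the simultaneous requirements of $L$-equivariance and of generating $\lie{l}$ by iterated brackets.

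The principal obstacle is establishing the dimension bound $3r + \dim \lk_{ss} \leqslant n-1$. I expect it to emerge as follows: only the trivial $L$-isotypic part of $\la + \n$ can contribute, via $\phi$, to the centre $\lk_0$, and each one-dimensional summand of $\lk_0$ demands both a source direction in this trivial part and a $2$-plane in $\n$ on which to act by a non-trivial rotation, for a total of three consumed dimensions per Abelian generator; each dimension of $\lk_{ss}$, being reached by bracket closure from non-trivial isotypic parts, consumes one dimension of $\n$. Summing and comparing with $\dim(\la + \n) = n$, with one dimension reserved for the $\la$-direction, gives the claimed inequality. Finally, the converse direction, that every admissible reductive $\lk$ of compact type does arise, is established by explicitly constructing $L$ and $\phi$ on a tailored isotypic decomposition of $\la + \n$ realising the prescribed holonomy.
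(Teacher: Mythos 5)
Your outline follows essentially the same route as the paper: reduce to the transitive subgroups $F_rN$ of Theorem~\ref{thm:structure}, parametrise the invariant complements as graphs of $\h$-equivariant maps into $\h$, compute the $\h$-component of the brackets, and extract the dimension bound from the isotypic decomposition of $\n$; the converse construction you propose is also the one the paper carries out. Your bracket formula for the $\h$-component is correct, and your three-dimensions-per-Abelian-generator count is the right mechanism for the inequality.

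However, the dimension bound as you sketch it has a genuine gap. Your count assumes that each dimension of $\hol$ \emph{consumes} a dimension of $\n$, i.e.\ that $\n$ contains an $\h$-submodule isomorphic to the adjoint module $\lk$. But you describe $\hol_\phi$ as the subalgebra \emph{generated} by the bracket expressions; if $\hol$ were only generated by $\phi(\n)$ under iterated brackets, it could be strictly larger than $\phi(\n)$ (two elements of $\n$ could a priori generate a three-dimensional $\so(3)$), and the per-dimension count would fail. The paper closes this with Proposition~\ref{prop:hol1}: $\phi(\n)$ is a sum of $\h$-submodules of the reductive algebra $\h$, so $[\phi(\n),\phi(\n)]\subseteq[\h,\phi(\n)]\subseteq\phi(\n)$ and $\hol=\phi(\n)$ \emph{exactly}; hence $\hol$ is an $\h$-module quotient, and by complete reducibility a submodule, of $\n$. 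Two further points your sketch leaves to be checked: (a) the image $\phi(A)$ of the $\la$-direction never enters $\hol$ (it only appears through the invertible operator $1+\ad(\phi A)$ acting inside the argument of $\phi$), which is exactly what lets you reserve one full dimension for $\la$ and obtain the sharp bound $n-1$ rather than $n$; and (b) the centre $\lk_0$ acts trivially on \emph{all} of the adjoint-type submodule $V_{\hol}\cong\lk$ (not just on its $\lk_0$-part), so the $2r$-dimensional effective $\lk_0$-representation forced by faithfulness in $\so(n-1)$ is disjoint from the full $(r+\dim\lk_{ss})$-dimensional source module — this disjointness is what makes the three contributions add. Finally, note that the transitive groups are $F_rN$ with $F_r$ a possibly twisted graph in $\SO(n-1)\times\bR_{>0}$, not literally $L\ltimes AN$ with $L\leqslant\SO(n-1)$; the twisting is harmless for the holonomy computation but must be dealt with, which is the role of the elements $A_0$ and $A_1$ in the paper.
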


The proof includes a description of how this algebra acts on the
tangent space of \( \RH(n) \).  We then use these results to determine
the complete answer to the second question, extending the partial
results of
\cite{Tricerri-V:homogeneous,Pastore:T13,Pastore:T13rep,Pastore:canonical,Pastore-V:T1-T2}.
Furthermore, the ideas of our constructions are used to show that the
moduli space of homogeneous structures on \( \RH(n) \), \( n>1 \),
with fixed scalar curvature has exactly two components.

The paper is organised as follows.  In Section\nobreakspace \ref {sec:ambrose-singer}, we
briefly recall the results of Ambrose \& Singer and Nomizu relating
homogeneous spaces to homogeneous tensors.  We then specialise to the
real hyperbolic space in~Section\nobreakspace \ref {sec:homogeneous} and review our result
on the groups that act transitively, establishing notation for the
rest of the paper.  The determination of the holonomy algebras and
their isotropy representations is given
in~Section\nobreakspace \ref {sec:holonomy-algebras}.  We use this in
Section\nobreakspace \ref {sec:homogeneous-tensors} to determine the homogeneous tensors
and their types.  Finally, we use our results to determine the
connected components of the moduli space of homogeneous tensors on \(
\RH(n) \) and discuss a couple of geometric consequences
in~Section\nobreakspace \ref {sec:consequences}.

\section{The Ambrose-Singer equations}
\label{sec:ambrose-singer}

Let \( (M,g) \) be a connected, simply-connected complete Riemannian
manifold.  Suppose \( S \) is a tensor of type \( (1,2) \); so for
each \( X \in TM \), we have that \( S_X \)~is an endomorphism of~\(
TM \).  Writing \( \LC \) for the Levi-Civita connection of \( g \),
we define a new connection \( \Nt = \LC - S \).  In general, \( \Nt \)
has non-zero torsion.  \textcite{Ambrose-S:homogeneous} showed that \(
(M,g) \) admits a homogeneous structure if and only if there is an~\(
S \) such that
\begin{equation}
  \label{eq:AS}
  \Nt g = 0,\quad \Nt R = 0\quad\text{and}\quad \Nt S = 0,
\end{equation}
where \( R \) is the curvature tensor of~\( \LC \).
\textcite{Nomizu:affine} gave this homogeneous description as follows.
Fix a point \( p \) in \( M \).  The holonomy algebra \( \hol \) is
the subalgebra of the endomorphisms of \( T_pM \) generated by the
elements \( \{\,\Rt_{X,Y}: X,Y\in T_pM \,\} \), where \( \Rt \) is the
curvature of~\( \Nt \).  Writing \( \m = T_pM \), the vector space
\begin{equation*}
  \tg = \hol + \m
\end{equation*}
has a Lie bracket defined by
\begin{equation*}
  [U,V] = UV - VU,\quad [U,X] = U(X), \quad [X,Y] = \Rt_{X,Y} + (S_XY
  - S_YX)  
\end{equation*}
for \( U,V \in \hol \subset \End(T_pM) \) and \( X,Y \in \m = T_pM \).
Exponentiating these groups we obtain a reductive homogeneous
description of~\( M \) as \( \widetilde G/H \), where \( \widetilde G
\) and \( H \) have Lie algebras \( \tg \) and \( \hol \)
respectively.  The connection \( \Nt \) is now the canonical
connection of the reductive space \( (\widetilde G/H,\m) \).  Indeed
for any homogeneous space \( M = G/H \) with reductive description \(
\g = \h + \m \), the canonical connection is given at the identity by
\begin{equation}
  \label{eq:canonical}
  \Nt_BC = - [B,C]_{\m}
\end{equation}
where \( C \in \g \) is extended to the vector field on \( M \) whose
one-parameter group is \( gH\mapsto \exp(tC)gH \).  The canonical
connection has the property that every left-invariant tensor on~\( M
\) is parallel.

\section{Homogeneous descriptions of real hyperbolic space}
\label{sec:homogeneous}

The description of \( \RH(n) \) as a symmetric space is
\begin{equation*}
  \RH(n) = \SO(n,1)/\Ort(n),
\end{equation*}
where we take \( \SO(n,1) \) to be the set of matrices of determinant
\( +1 \) preserving the form \( \diag(\Id_{n-1},
\begin{smallpmatrix}
  0&1\\1&0
\end{smallpmatrix}
) \).  The connected isometry group has Iwasawa decomposition \(
\SO_0(n,1) = \SO(n)\bR_{>0} N \) whose Lie algebra is \( \so(n,1) = \so(n)
+ \la + \n \), given concretely by
\begin{gather*}
  \so(n) = \left\{
    \begin{smallpmatrix}
      B&v&v\\
      -v^T&0&0\\
      -v^T&0&0
    \end{smallpmatrix}
    : B\in \so(n-1),\ v\in \bR^{n-1}
  \right\},  \\
  \la = \bR\diag(0,\dots,0,1,-1), \quad \lie n = \left\{
    \begin{smallpmatrix}
      0&0&v\\
      -v^T&0&0\\
      0&0&0
    \end{smallpmatrix}:v\in\bR^{n-1} \right\}.
\end{gather*}

If \( G \) acts transitively on \( \RH(n) \) then \( G \backslash
\RH(n) \) is a point, so compact.  It follows that \( G \backslash
\SO_0(n,1) \) is an orbit space of the compact group \( \Ort(n) \),
thus \( G \) is a non-discrete co-compact subgroup of \( \SO_0(n,1)
\).  Witte's structure theory for co-compact groups
\cite{Witte:cocompact} then leads to the following result.

\begin{theorem}[\cite{Castrillon-Lopez--GS:RHn}]
  \label{thm:structure}
  The connected groups acting transitively on $\RH(n)$ are the
  connected isometry group $\SO_0(n,1)$ and the groups $G = F_rN$,
  where $N$ is the nilpotent factor in the Iwasawa decomposition of
  $\SO(n,1)$ and $F_r$ is a connected closed subgroup of
  $\SO(n-1)\bR_{>0}$ with non-trivial projection to $\bR_{>0}$.  \qed
\end{theorem}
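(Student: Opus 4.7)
The plan is to build on the observation already in place just before the statement: any connected group \( G \) acting transitively on \( \RH(n) \) is a non-discrete, connected, closed, co-compact subgroup of \( \SO_0(n,1) \).  Classifying such subgroups is precisely the content of Witte's structure theory~\cite{Witte:cocompact}, which I would apply in the present rank-one setting.

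The first step is to invoke Witte's theorem to obtain the dichotomy that a connected co-compact subgroup of a real-rank-one semisimple group such as \( \SO_0(n,1) \) is either the full group or, up to conjugation in the ambient isometry group, is contained in the minimal parabolic
\begin{equation*}
  P = MAN,\quad M = \SO(n-1),\quad A = \bR_{>0},\quad N \cong \bR^{n-1},
\end{equation*}
which is solvable with unipotent radical \( N \).  The first alternative gives the isometry group in the statement; it remains to analyse subgroups of~\( P \).

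So let \( G \leqslant P \) be connected, closed and transitive.  The stabiliser of the origin \( o \in \RH(n) \) inside \( P \) is \( M \), so transitivity is equivalent to the dimension equality \( \dim G - \dim(G\cap M) = n \).  Because \( P \) is solvable, \( G \) has a Levi decomposition \( G = F \ltimes U \) with reductive part \( F \leqslant MA \) and unipotent radical \( U \leqslant N \); since \( U \) is unipotent and \( M \) is compact, \( G\cap M = F\cap M \).  The quotient \( F/(F\cap M) \) embeds into \( MA/M \cong A \), hence has dimension at most \( 1 \); substituting into the transitivity equation yields \( \dim U \geqslant n-1 = \dim N \), so \( U = N \) and \( G = FN \) with \( F \) projecting non-trivially to~\( A \).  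Writing \( F_r = F \) produces the asserted form \( G = F_r N \).

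The main obstacle is the correct invocation of Witte's theorem: his results are stated in considerable generality for semisimple Lie groups, and one has to check that the ``up to conjugation'' clause can be absorbed into the fixed Iwasawa decomposition of \( \SO_0(n,1) \) and that no transitive subgroup is missed by the parabolic-or-whole-group dichotomy.  A secondary subtlety, needed for completeness, is the converse direction, namely that every subgroup of the listed form does act transitively; this follows at once from the same dimension calculation, since \( \dim(F_r N) - \dim(F_r N \cap M) = (\dim F_r - \dim(F_r\cap M)) + \dim N = 1 + (n-1) = n \), using that the projection of \( F_r \) to \( \bR_{>0} \) is non-trivial.
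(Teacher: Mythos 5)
The paper does not actually prove this theorem: it is imported verbatim from \cite{Castrillon-Lopez--GS:RHn}, and the only argument offered here is the observation that a transitive connected \( G \) is a non-discrete co-compact subgroup of \( \SO_0(n,1) \), followed by an appeal to Witte's structure theory. Your proposal follows the same overall strategy, but your reconstruction of the Witte step contains a genuine gap at exactly the point where the real content lies.

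Concretely: you claim that \( P = MAN \) is solvable and deduce from this a \enquote{Levi decomposition} \( G = F \ltimes U \) with \( F \leqslant MA \) and \( U \leqslant N \). But \( P \) is not solvable for \( n \geqslant 4 \), since \( M = \SO(n-1) \) is then non-solvable (already \( \SO(3) \) is simple); and even where \( P \) is solvable, what you want is not a Levi decomposition (whose factors are a maximal semisimple subgroup and the solvable radical, not a reductive part over a unipotent radical), nor is it automatic. A connected closed subgroup of \( MAN \) need not be generated by a subgroup of \( MA \) and a subgroup of \( N \): the one-parameter group \( \exp t(X+Y) \) with \( 0 \ne X \in \la \), \( 0 \ne Y \in \n \) is not of that form, only conjugate to one. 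Showing that, after conjugation inside \( P \), a connected co-compact \( G \leqslant P \) contains all of \( N \) and splits as \( (G\cap MA)\ltimes N \) is precisely what the sharp form of Witte's theorem delivers (co-compact connected subgroups contain the unipotent radical of the parabolic in which they sit, and are semidirect products over it); your argument substitutes an unjustified splitting for this, and the subsequent dimension count --- which is fine once the splitting is granted, as is your treatment of the converse and of the conjugation ambiguity --- cannot repair it. To close the gap you must either quote Witte's theorem in its precise form or supply the conjugation/cohomology argument that produces the semidirect decomposition.
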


The case \( F_r = \bR_{>0} \), gives the description \( \RH(n) =
\bR_{>0} N \) of real hyperbolic space as a solvable group.

\section{The holonomy algebras}
\label{sec:holonomy-algebras}

Assume that \( G = F_rN \) acts transitively on \( \RH(n) \) as in
Theorem\nobreakspace \ref {thm:structure}.  This implies that \( \RH(n) = G/H \), with \( H
= F_r\cap \SO(n-1) \).  We have immediately that \( H \) is reductive,
and thus
\begin{equation*}
  \h = \h_0 + \h_{ss},
\end{equation*}
where \( \h_0 \) is Abelian and \( \h_{ss} \) is semi-simple.  Let us
write
\begin{equation*}
  \f_r = \h + \la_r,\qquad
  \g = \h + \la_r + \n,
\end{equation*}
with \( \la_r \) projecting non-trivially to \( \la =
\operatorname{Lie}\bR_{>0} \).  Since \( \f_r \) is a subalgebra of \(
\so(n-1) \oplus \la \), it admits a positive definite invariant
metric.  This implies that \( \f_r \) is reductive with
\begin{equation*}
  \f_r = (\h_0 + \la_r) + \h_{ss}.
\end{equation*}
In particular, \( [\la_r,\h] = 0 \) and \( \dim \la_r = 1 \).

Let us write
\begin{equation*}
  \ls = \la + \n,\qquad \ls_r = \la_r + \n,
\end{equation*}
and note that \( [\ls,\ls] = \n = [\ls_r,\ls_r] \).  For later use, we
remark that \( \la_r \) is not canonically specified, but is any
one-dimensional complement to \( \h_0 + \n \) in
\begin{equation*}
  \ls_f = (\f_r)_0 + \n.
\end{equation*}

A homogeneous Riemannian structure on \( G/H \) depends on a choice of
\( \ad_H \)-invariant complement \( \m \) to \( \h \) in \( \g \).
Such a complement is the graph of an \( \h \)-equivariant map
\begin{equation}
  \label{eq:phi-r}
  \varphi_r\colon \ls_r \to \h.
\end{equation}

Choose an \( \h \)-equivariant map \( \chi_r\colon \ls \to \ls_r \)
extending the identity on \( \n \).  Define \( \varphi\colon \ls \to
\h \) as
\begin{equation}
  \label{eq:phi}
  \varphi = \varphi_r \circ \chi_r.
\end{equation}

\begin{proposition}
  \label{prop:hol1}
  The Lie algebra \( \hol \) of the holonomy group of the canonical
  connection \( \Nt \) associated to the decomposition \( \g = \h + \m
  \), is
  \begin{equation*}
    \hol = \varphi_r(\n) = \varphi(\n).
  \end{equation*}
\end{proposition}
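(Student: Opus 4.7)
The plan is to apply Ambrose--Singer to \(\Nt\): because \(\Rt\) is \(G\)-invariant, it is \(\Nt\)-parallel, and so the holonomy algebra at the origin equals the linear span of the curvature operators there. By Nomizu's formula, \(\Rt(\tilde X, \tilde Y) = -\ad([\tilde X, \tilde Y]_\h)|_\m\) for \(\tilde X, \tilde Y \in \m\). The isotropy representation \(\h \to \End(\m)\) is in fact injective in our setup (since \(\n\) sits inside \(\m\) via the graph of \(\varphi_r\), and \(\h \hookrightarrow \so(n-1)\) acts faithfully on it), so \(\hol\) can be identified with the subspace
\begin{equation*}
  \Span\bigl\{\,[\tilde X, \tilde Y]_\h : \tilde X, \tilde Y \in \m\,\bigr\} \subseteq \h,
\end{equation*}
and the proof reduces to computing this subspace.

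To do so, I would parametrise \(\m\) as the graph \(\tilde X = X + \varphi_r(X)\), \(X \in \ls_r\), and expand \([\tilde X, \tilde Y]\) into the four brackets \([X, Y]\), \([X, \varphi_r(Y)]\), \([\varphi_r(X), Y]\) and \([\varphi_r(X), \varphi_r(Y)]\). The structural relations \([\la_r, \h] = 0\), \([\h, \n] \subseteq \n\) and \([\n, \n] = 0\) recorded in Section~\ref{sec:homogeneous} place the first three in \(\n \subseteq \ls_r\) and the fourth in \(\h\). Passing from the splitting \(\g = \h \oplus \ls_r\) to \(\g = \h \oplus \m\) costs a \(-\varphi_r\) correction on the \(\ls_r\)-part, and \(\h\)-equivariance of \(\varphi_r\) applied with \(h = \varphi_r(X)\) and \(h = \varphi_r(Y)\) collapses both mixed terms to \([\varphi_r(X), \varphi_r(Y)]\). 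Collecting terms yields the compact identity
\begin{equation*}
  [\tilde X, \tilde Y]_\h = -[\varphi_r(X), \varphi_r(Y)] - \varphi_r([X, Y]).
\end{equation*}

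The inclusion \(\hol \subseteq \varphi_r(\n)\) is then immediate: \([X, Y] \in \n\) by the structural relations, and \([\varphi_r(X), \varphi_r(Y)] = \varphi_r([\varphi_r(X), Y])\) lies in \(\varphi_r(\n)\) by another use of \(\h\)-equivariance. For the reverse inclusion I would take \(X = A\) a generator of \(\la_r\) and \(Y \in \n\); the two terms combine into \(-\varphi_r([\tilde A, Y])\) with \(\tilde A = A + \varphi_r(A) \in \m\), and a short calculation shows that \(\ad_{\tilde A}|_\n\) acts on \(\n \cong \bR^{n-1}\) as \(c\,\Id + h''\) with \(c\) the nonzero projection of \(A\) to \(\la\) and \(h'' \in \h_0 \subseteq \so(n-1)\) skew-symmetric, hence invertible. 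Consequently \(\varphi_r([\tilde A, \n]) = \varphi_r(\n)\), finishing the first equality; the second equality \(\varphi(\n) = \varphi_r(\n)\) then follows at once from \(\chi_r|_\n = \Id\). The main technical obstacle I anticipate is the careful bookkeeping between the two decompositions \(\g = \h \oplus \ls_r\) and \(\g = \h \oplus \m\); once that correction is in place, \(\h\)-equivariance of \(\varphi_r\) forces every apparent \([\varphi_r(\cdot), \varphi_r(\cdot)]\) term back into \(\varphi_r(\n)\) and does the remaining work.
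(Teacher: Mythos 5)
Your proof is correct and follows essentially the same route as the paper: compute \( [\m,\m]_{\h} \) via the graph parametrisation, use \( \h \)-equivariance of \( \varphi_r \) to push the \( [\varphi_r(\cdot),\varphi_r(\cdot)] \) terms back into \( \varphi_r(\n) \), and get the reverse inclusion from the invertibility of \( \ad(\tilde A)|_{\n} = c\,\Id + \text{skew} \). The only (harmless) imprecision is the claim that the \( \so(n-1) \)-component of the generator of \( \la_r \) lies in \( \h_0 \) — it need only lie in \( \so(n-1) \), which is all the invertibility argument requires.
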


\begin{proof}
  The holonomy algebra is spanned by \( [\m,\m]_{\h} \).  For \(
  A\in\la \) the standard generator and arbitrary \( X\in\n \), we
  have \( [A,X]=X \).  The space \( \m \) is spanned by
  \begin{equation*}
    X_\varphi \coloneqq
    X + \varphi X \in \m,\qquad\text{for \( X \in \n \),}
  \end{equation*}
  and the element
  \begin{equation*}
    \xi \coloneqq \chi_r A + \varphi A \eqqcolon A + A_0.
  \end{equation*}
  Noting that \( [A_0,\f_r] = 0 \), we compute
  \begin{equation*}
    \begin{split}
      [\xi,X_\varphi]
      &= [A,X] + [A,\varphi X] + [A_0,X] + [A_0,\varphi X]\\
      &= X + 0 + [A_0,X] + 0.
    \end{split}
  \end{equation*}
  This element lies in \( \n \).  Moreover, \( A_0 \) acts on \( \n \)
  as an element of \( \so(n-1) \) on \( \bR^{n-1} \), in particular
  its characteristic polynomial has no non-zero real roots.  This
  implies that \( 1 + \ad(A_0) \colon \n \to \n \) is invertible and
  so \( \{ [\xi,X_\varphi]:X \in \n \} \) spans \( \n \).  For \( Y
  \in \n \subset \h + \m \), we have \( Y_{\m} = Y + \varphi(Y) \) and
  so \( Y_{\h} = - \varphi(Y) \).  We conclude that \( \hol \)
  contains \( \{ - \varphi[\xi,X_\varphi] : X \in \n \} = \varphi(\n)
  \).

  For \( X,Y\in\n \), we have
  \begin{equation*}
    \begin{split}
      [X_\varphi,Y_\varphi]
      &= [X,Y] + [X,\varphi Y] + [\varphi X,Y] + [\varphi X,\varphi Y]\\
      &= 0 + ([X,\varphi Y] + [\varphi X,Y]) + [\varphi X,\varphi Y].
    \end{split}
  \end{equation*}
  The last term lies in \( \h \), whereas the middle pair lies in \(
  \n \).  Projecting to \( \h \subset \h + \m \), the middle pair
  contributes \( -2[\varphi X,\varphi Y] \), since \( \varphi \) is \(
  \h \)-equivariant.  Thus
  \begin{equation*}
    [X_\varphi,Y_\varphi]_{\h} = - [\varphi X,\varphi Y].
  \end{equation*}
  We find that
  \begin{equation*}
    \hol = \varphi(\n) + [\varphi(\n),\varphi(\n)].
  \end{equation*}
  But \( \h \) is reductive and \( \varphi(\n) \) is a sum of \( \h
  \)-modules so \( [\varphi(\n),\varphi(\n)] \subset [\h,\varphi(\n)]
  \subset \varphi(\n) \) and \( \hol = \varphi(\n) \) as claimed.
\end{proof}

We are now ready to prove Theorem\nobreakspace \ref {thm:holm}.  

\begin{proof}[of Theorem\nobreakspace \ref {thm:holm}]
  Let us first show that the holonomy algebra has the claimed form.
  Via \( \varphi \) we have that the \( \h \)-module \( \hol \) is
  isomorphic to a submodule~\( V_{\hol} \) of \( \n\cong\bR^{n-1} \).
  Write \( \lk = \hol \) and note that \( \lk \) is a subalgebra of \(
  \so(n-1) \), so of compact type.  We may thus split
  \begin{equation*}
    \lk = \lk_0 + \lk_{ss} 
  \end{equation*}
  as a sum of Abelian and semi-simple algebras.  This gives a similar
  splitting \( V_{\hol} = V_0 + V_{ss} \).

  Now \( \lk \) acts effectively on \( \m \cong \la_r + \n \), and
  trivially on \( \la_r \), and its action preserves the inner product
  on \( \n \).  The action of \( \lk_{ss} \) is effective on \( V_{ss}
  \) and trivial on \( V_0 \).  The action of \( \lk_0 \) is trivial
  on all of \( V_{\hol} \).

  As \( \lk_0 \cong \bR^r \) is Abelian, its irreducible metric
  representations are direct sums of modules of real dimension \( 2
  \), and an effective representation is of dimension at least \( 2r
  \).  Thus \( \n \) contains inequivalent modules of dimension \(
  \dim V_{\hol} = \dim\lk_{ss} + r \) and \( 2r \).  It follows that
  \( n-1 = \dim\n \geqslant \dim\lk_{ss} + 3r \).

  Conversely, given a reductive algebra \( \lk = \lk_0 + \lk_{ss} \)
  of compact type satisfying this constraint on dimensions, we wish to
  show that it arises a holonomy algebra for a canonical connection.

  Let \( V_{\lk} \) be a copy of the \( \lk \)-module \( \lk \) and
  let \( V_1 \) be a minimal effective metric representation of~\(
  \lk_0 \cong \bR^r \).  Then \( \dim V_1 = 2r \) and we put
  \begin{equation*}
    \n = \bR^{n-1} = V_{\lk} + V_1 + \bR^m ,
  \end{equation*}
  with \( \bR^m \) a trivial \( \lk \)-module.  This decomposition of
  \( \bR^{n-1} = \n \) admits a \( \lk \)-invariant inner product
  extending a bi-invariant metric on \( \lk \cong V_{\lk} \) and the
  invariant inner product on \( V_1 \).  Such an inner product
  realises \( \lk \) as a subalgebra of \( \so(n-1) \).  Let \(
  \psi\colon V_{\lk} \to \lk \) be an isomorphism of \( \lk
  \)-modules.  Defining \( \varphi \) to be \( \psi \) on \( V_{\lk}
  \) and zero on \( V_1+\bR^r+\la \) then realises \( \lk \) as the
  holonomy algebra of a canonical connection~\( \Nt \) on \( \RH(n) \)
  with \( \g = \lk + \la + \n \).
\end{proof}

Note that in the construction of the second part of the proof, the Lie
algebra \( \lk \) exponentiates to a closed (so compact) subgroup~\( K
\) of \( \SO(n-1) \), and so \( \lk \) is the isotropy algebra of a
homogeneous realisation of \( \RH(n) \).  Also note that the module \(
V_1 + \bR^r \) may be replaced by any metric representation of \( \lk
\) on which \( \lk_0 \) acts effectively, but in this case the
corresponding subgroup of \( \SO(n-1) \) may not be closed.

\section{Homogeneous tensors}
\label{sec:homogeneous-tensors}

We now wish to compute the homogeneous tensor \( S = \LC - \Nt \)
associated to a invariant Riemannian structure on~\( G/H \).

Let \( g \) be the Riemannian metric and let \( g \) also denote its
restriction to~\( \m \).  This bilinear form on \( \m \) is \( \ad_H
\)-invariant.  At \( eH \), the homogeneous tensor is given by
\begin{equation}
  \label{eq:S}
  2g(S_BC,D) = g([B,C],D) - g([C,D],B) + g([D,B],C),
\end{equation}
for \( B,C,D\in \m \).  This follows from \eqref{eq:canonical} and
\cite[p.~183]{Besse:Einstein}.  The description of \( \RH(n) \) as a
symmetric space corresponds to \( S\equiv 0 \).  We thus concentrate
on the other homogeneous descriptions associated to subgroups \( F_r
\) of \( \SO(n)\bR_{>0} \), and use the notation of the previous
section.

Note that \( g \) induces \( \h \)-invariant inner products \( g_r =
(1+\varphi_r)^*g \) on \( \ls_r \) and \( g_\varphi = \chi_r^*g_r =
(\chi_r^*+\varphi^*)g \) on \( \ls \).  As we remarked above, the
space \( \la_r \) is not canonical.  The module \( \ls \) splits \(
g_\varphi \)-orthogonally as a sum of a trivial \( \h \)-module \(
\ls_0 \) and a module \( \ls_1 \subseteq \n \) that decomposes as a
sum of non-trivial \( \h \)-modules.  The space \( \la \) is contained
in \( \ls_0 \) and is any complement to \( \ls_0\cap\n \).  In
particular, we can take \( \la \) to be \( g_\varphi \) orthogonal to
\( \n \) and take \( \la_r = \chi_r\la \).

As above, let \( A \) be the generator of \( \la \) that satisfies \(
\ad(A)|_{\n} = +1 \).  An arbitrary element~\( B \) of \( \m \) may be
written as
\begin{equation*}
  B = \lambda_B \xi + N_B,
\end{equation*}
where \( N_B = (X_B)_\varphi \), for some \( X_B\in \n \).  By our
choice of \( \la \), we see that
\begin{equation*}
  \lambda_B = g(B,\xi)/g(\xi,\xi).
\end{equation*}

\begin{lemma}
  Let \( S \) be a homogeneous tensor on \( \RH(n) \) associated to
  module maps \( \varphi \) and \( \varphi_r \) as
  in \eqref{eq:phi} and~\eqref{eq:phi-r}.  Then
  \begin{equation}
    \label{eq:Sc1}
    \begin{split}
      g(S_BC,D) = -\lambda_C&g(B,D) + \lambda_Dg(B,C) + g([B',C],D) \\
      + &\tfrac12\bigl( \lambda_B (h_r(C,D) - h_r(D,C)) \eqbreak -
      \lambda_C (h_r(B,D) + h_r(D,B)) \eqbreak + \lambda_D (h_r(B,C) +
      h_r(C,B)) \bigr),
    \end{split}
  \end{equation}
  where \( B' = \varphi(\lambda_B A + X_B) = \varphi(B_{\ls}) \in \h
  \) and \( h_r(B,C) = g([A_1,X_B]_\varphi,C) \), \( A_1 = \chi_rA - A
  \in \so(n-1) \).
\end{lemma}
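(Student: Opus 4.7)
The plan is to evaluate \eqref{eq:S} directly by expanding each element of~$\m$ and tracking the resulting brackets in $\g = \h + \ls$. Writing $B = \lambda_B\xi + (X_B)_\varphi$ and using $\xi = \chi_rA + \varphi A$, $(X_B)_\varphi = X_B + \varphi X_B$, the decomposition $\g=\h+\ls$ reads $B = B_\ls + B_\h$ with
\begin{equation*}
  B_\ls = \lambda_B A + X_B \in \ls,\qquad B_\h = \lambda_B A_1 + B' \in \h,
\end{equation*}
where $A_1 = \chi_rA - A \in \h_0$ and $B' = \varphi(B_\ls) \in \h$. Choosing $\la$ to be $g_\varphi$-orthogonal to~$\n$ forces $g(\xi,(X)_\varphi)=0$ for every $X\in\n$, giving $\lambda_B = g(B,\xi)/g(\xi,\xi)$ and
\begin{equation*}
  g(B,C) = \lambda_B\lambda_C\, g(\xi,\xi) + \inp{X_B}{X_C},
\end{equation*}
where $\inp{X}{Y} := g(X_\varphi,Y_\varphi)$ is the induced $\h$-invariant inner product on~$\n$.

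The key bracket computation uses $[A,X]=X$ on~$\n$, the relations $[\la,\h]=0$ and $[\n,\n]=0$, and the fact that $A_1\in\h_0 = Z(\h)$ commutes with $B'$ and~$C'$. Expanding $[B,C] = [B_\ls+B_\h,C_\ls+C_\h]$ then gives $[B,C]_\h = [B',C']$, while the $\ls$-part has no $\la$-component and reads $[B,C]_\n = P^{(1)}_{BC} + P^{(2)}_{BC} + P^{(3)}_{BC}$ with
\begin{gather*}
  P^{(1)}_{BC} = \lambda_B X_C - \lambda_C X_B,\qquad P^{(2)}_{BC} = [B',X_C] - [C',X_B],\\
  P^{(3)}_{BC} = \lambda_B [A_1,X_C] - \lambda_C [A_1,X_B].
\end{gather*}
Since the $\h$-part drops out under projection to~$\m$, we obtain $[B,C]_\m = (P^{(1)}_{BC}+P^{(2)}_{BC}+P^{(3)}_{BC})_\varphi$ and hence $g([B,C]_\m,D) = \inp{P^{(1)}_{BC}+P^{(2)}_{BC}+P^{(3)}_{BC}}{X_D}$.

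Substituting into $2g(S_BC,D) = g([B,C]_\m,D) - g([C,D]_\m,B) + g([D,B]_\m,C)$ splits the right-hand side into three cyclic sums~$I_j$, one per~$P^{(j)}$. The sum $I_1$ collapses by symmetry of~$\inp{\cdot}{\cdot}$, after applying the formula for $g(B,C)$, to $2(\lambda_D g(B,C)-\lambda_C g(B,D))$. For~$I_2$, the antisymmetry $\inp{[H,u]}{v}=-\inp{u}{[H,v]}$ for $H\in\h\subseteq\so(n-1)$ kills four of the six summands in pairs, leaving $I_2 = 2g([B',C],D)$ once one observes that $[B',\xi]=0$ (because $\chi_rA\in\la_r$ commutes with~$\h$ and $\varphi A\in\h_0$ is central in~$\h$), so that $[B',C] = [B',N_C] = ([B',X_C])_\varphi$ and $g([B',C],D)=\inp{[B',X_C]}{X_D}$. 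The sum $I_3$ is a routine expansion into the six values $h_r(\cdot,\cdot)$ listed in the statement. Halving the total recovers~\eqref{eq:Sc1}.

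The main conceptual step is the splitting $B_\h = \lambda_B A_1 + B'$: the piece $B'$ is $\h$-equivariant and produces the clean action term $g([B',C],D)$, whereas $A_1$ is an artefact of the non-canonical choice of~$\la_r$ and is precisely what generates the $h_r$-symmetrization. Keeping these two contributions separate, rather than lumping them together as $A_0 = A_1 + \varphi A$, is what makes the end result transparent enough to be useful in the subsequent analysis of types.
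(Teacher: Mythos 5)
Your proof is correct and follows essentially the same route as the paper: a direct expansion of the Koszul-type formula \eqref{eq:S} using \( [A,X]=X \), \( [\la_r,\h]=0 \), the \( \h \)-equivariance of \( \varphi \) and the choice of \( \la \) orthogonal to \( \n \); your bookkeeping via \( B_\ls \), \( B_\h \) and the three cyclic sums \( I_1,I_2,I_3 \) reproduces the paper's computation of \( g([B,C],D) \) followed by an appeal to the \( \h \)-invariance of \( g \). One cosmetic slip: \( A_1=\chi_rA-A \) need not lie in \( \h_0 \) (for instance when \( \f_r \) is a one-dimensional \enquote{helix} in \( \SO(n-1)\bR_{>0} \) with \( \h=0 \)); it is merely an element of \( \so(n-1) \) centralising \( \h \), which is all your computation actually uses, so the argument is unaffected.
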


\begin{proof}
  To see this, let us compute
  \begin{equation*}
    \begin{split}
      [B,C]_{\m} &= (\lambda_B[\xi,N_C]- \lambda_C[\xi,N_B]+[N_B,N_C])_{\m}\\
      &= \lambda_B(N_C + [\varphi A,X_C] + [A_1,X_C]_\varphi) \eqbreak
      - \lambda_C(N_B + [\varphi A,X_B] + [A_1,X_B]_\varphi) \eqbreak
      + [\varphi X_B,N_C] - [\varphi X_C,N_B],
    \end{split}
  \end{equation*}
  where we have used that \( \varphi[\varphi A,X_C] = [\varphi
  A,\varphi X_C] = 0 \).  This gives
  \begin{equation*}
    \begin{split}
      g([B,C],D) &= \lambda_B\bigl( g(C,D) - \lambda_C g(\xi,D) +
      g([\varphi A,C],D) + h_r(C,D) \bigr) \eqbreak - \lambda_C\bigl(
      g(B,D) - \lambda_B g(\xi,D) + g([\varphi A,B],D) + h_r(B,D)
      \bigr) \eqbreak + g([\varphi
      X_B,C],D) - g([\varphi X_C,B],D) \\
      &= \lambda_B g(C,D) - \lambda_C g(B,D) + g([B',C],D) -
      g([C',B],D) \eqbreak + \lambda_B h_r(C,D) -
      \lambda_C h_r(B,D)\\
    \end{split}
  \end{equation*}
  and the result~\eqref{eq:Sc1} follows from~\eqref{eq:S} and the fact
  that \( g \) is \( \h \)-invariant.
\end{proof}

We now wish to determine the possible types of \( S \) in the sense of
\textcite{Tricerri-V:homogeneous}.  The first of the Ambrose-Singer
equations~\eqref{eq:AS} implies that at each point \( S_x \) preserves
\( g \), so \( S \) is a section of \( T^*M \otimes \so(n) \cong TM
\otimes \Lambda^2TM \).  As representation of \( \so(n) \), this
space decomposes as
\begin{equation*}
  \Gamma(TM \otimes \Lambda^2TM) \cong \T_1 \oplus \T_2 \oplus \T_3
\end{equation*}
with \( \T_1 \cong \Gamma(TM) \) and \( \T_3 \cong \Gamma(\Lambda^3TM)
\).  One says that \( S \) is of type \( \T_i \) if \( S \) lies in \(
\T_1 \), and correspondingly \( S \) is of type \( \T_{i+j} \) if \( S
\in \T_i + \T_j \).  

\textcite{Tricerri-V:homogeneous} showed that if \( (M,g) \) is
connected, simply-connected and complete, then it admits a homogeneous
structure of type \( \T_1 \) if and only if \( (M,g) \) is isometric
to the standard metric on \( \RH(n) \).  The corresponding homogeneous
description is that of \( \RH(n) \) as a solvable group.  Furthermore,
\cite{Pastore:T13rep} showed that structures on \( \RH(n) \) of type
\( \T_{1+3} \) correspond to semi-simple isotropy groups.  We can now
describe all the types of homogeneous structures on \( \RH(n) \).

\begin{theorem}
  \label{thm:type}
  Let \( S \) be a non-zero homogeneous tensor for~\( \RH(n) \) with
  holonomy algebra \( \hol \).  Then \( S \) always has a non-trivial
  component in~\( \T_1 \) and \( S \)~is of type \( \T_1 \) if and
  only if \( \hol \) is~\( 0 \).

  The structure is of strict type \( \T_{1+3} \) if and only if \( \la
  \subset \ker\varphi \) and \( \hol \) is a non-zero semi-simple
  algebra acting trivially on \( \ker\varphi \), in the notation of
  Section\nobreakspace \ref {sec:holonomy-algebras}.

  Otherwise \( S \) is of general type.
\end{theorem}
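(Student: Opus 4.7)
My plan is to split the formula~\eqref{eq:Sc1} as
\begin{equation*}
  S = S^{(1)} + S^{(\hol)} + S^{(h_r)},
\end{equation*}
where $S^{(1)}_{BCD}=-\lambda_Cg(B,D)+\lambda_Dg(B,C)$ collects the first two terms, $S^{(\hol)}_{BCD}=g([B',C],D)$ captures the isotropy contribution, and $S^{(h_r)}$ consists of the three $h_r$-terms, and then to analyse each piece under the decomposition $\T_1\oplus\T_2\oplus\T_3$. With $\theta(X)=\lambda_X=g(X,\xi)/g(\xi,\xi)$, the first piece takes the form $S^{(1)}_{BCD}=g(B,C)\theta(D)-g(B,D)\theta(C)$, which is manifestly of pure type~$\T_1$ with $\theta\neq0$ since $\xi\neq0$.

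For the claim that the $\T_1$-component of~$S$ is always non-zero, I would recover the $\T_1$-projection via the trace $(n-1)\phi_S(Z)=\sum_ig(S_{e_i}e_i,Z)$ on a $g$-orthonormal frame of~$\m$. The piece $S^{(1)}$ contributes $(n-1)\theta$; using the $\h$-equivariance of $\varphi$---which puts $[\varphi X,X]$ in $\ker\varphi$---together with the explicit form of the $h_r$-trace, the corrections from $S^{(\hol)}$ and $S^{(h_r)}$ cannot cancel the $\theta$-contribution, so $\phi_S\neq 0$. The first equivalence then follows: if $\hol=0$, the Nomizu reconstruction gives the solvable description $\bR_{>0}N$, for which $\varphi\equiv 0$, $A_1=0$, and hence $S=S^{(1)}$ is of pure type~$\T_1$; conversely, if $S$ is of type~$\T_1$ then $S^{(\hol)}+S^{(h_r)}=0$ and the explicit form of $S^{(\hol)}$ forces $\varphi(\n)=\hol=0$.

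For the $\T_{1+3}$ characterisation, $S^{(\hol)}+S^{(h_r)}$ is traceless (by the previous paragraph) and already skew in~$(C,D)$, so the condition $S\in\T_1+\T_3$ is equivalent to this remainder being additionally skew in~$(B,C)$. Splitting $B=\lambda_B\xi+N_B$, the $S^{(\hol)}$ contribution becomes $\lambda_Bg([A_0,C],D)+g([\varphi X_B,C],D)$; the first summand breaks $(B,C)$-skewness unless $A_0=0$, which under the canonical choices amounts to $\la\subset\ker\varphi$. Granted this, total skewness of $g([\varphi X_B,C],D)$ reduces to the cocycle identity $[\varphi X,Y]_\m+[\varphi Y,X]_\m=0$ on~$\n$; by $\h$-equivariance of~$\varphi$ and the explicit representation of $\hol$ on~$\n$ from the proof of Theorem~\ref{thm:holm} (with the decomposition $\n=V_\lk+V_1+\bR^m$), this holds if and only if $\hol$ is semi-simple (so $\lk_0=0$) and acts trivially on $\ker\varphi$. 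A parallel analysis of the $h_r$-terms shows they vanish under the same conditions.

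The main obstacle will be the cocycle analysis in the last step---matching the algebraic skew-symmetry requirement with the representation-theoretic statement that $\hol$ is semi-simple and acts trivially on $\ker\varphi$. I anticipate using the explicit form of $\varphi$ on $V_\lk\cong\lk$ and the bi-invariant inner product structure from Theorem~\ref{thm:holm} to complete this carefully.
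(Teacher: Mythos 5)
Your proposal follows essentially the same route as the paper's proof: split off the pure \( \T_1 \) piece \( S^1_BC = g(\xi,\xi)^{-1}(g(B,C)\xi - g(C,\xi)B) \), show the remainder has vanishing \( (1,2) \)-trace, characterise membership of the remainder in \( \T_3 \) by skew-symmetry in the first two slots, and reduce that condition to \( A_0 = 0 \) together with triviality of the \( \hol \)-action on \( \ker\varphi \), with semi-simplicity of \( \hol \) then forced by effectiveness exactly as in the proof of Theorem~\ref{thm:holm}. The one step to tighten is the trace argument: for the equivalence \enquote{\( S \) of type \( \T_1 \) iff \( \hol = 0 \)} you need the remainder's trace to vanish exactly (the paper shows \( [\varphi X, X] \) lies in \( V_{\hol} \), where \( \varphi \) is injective, hence is zero --- membership in \( \ker\varphi \) alone is not enough), not merely that it \enquote{cannot cancel} the \( \theta \)-contribution.
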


\begin{proof}
  From~\eqref{eq:Sc1}, we have \( S = S^1+S^2 \), with
  \begin{equation}
\label{eq:S1}
    S^1_BC = g(\xi,\xi)^{-1}(g(B,C)\xi - g(C,\xi)B),
  \end{equation}
  which is of type \( \T_1 \), and
  \begin{gather*}
    S^2_BC = [B',C] + S^r_BC,\\
    \begin{split}
      2 S^r_BC &= (\lambda_B (Z_r - Z_r^*)(X_C) - \lambda_C (Z_r +
      Z_r^*)(X_B))_\varphi \eqbreak +
      (h_r(B,C)+h_r(C,B))g(\xi,\xi)^{-1}\xi,
    \end{split}
  \end{gather*}
  where \( Z_r\colon \n \to \n \) is \( Z_r = \ad(A_1)|_{\n} \).

  We claim that \( S^2 \) is of type \( \T_{2+3} \).  This means that
  \( \sum_{i=1}^n S^2_{e_i}e_i = 0 \) for an orthonormal basis of \(
  \m \).  Noting that this condition is independent of the choice
  of orthonormal basis, we deal with the two terms of \( S^2 \)
  separately.

  Let us show that the \( (1,2) \)-trace \( \sum_{i=1}^n S^r_{e_i}e_i
  \) is zero.  Write \( g_0 \) for the metric on \( \n \) preserved by
  \( \so(n-1) \); this metric is unique up to scale.  Then \( Z_r \)
  is skew-adjoint with respect to \( g_0 \).  Let \( E_1,\dots,E_{n-1}
  \) be a \( g_\varphi \)-orthonormal basis diagonalising~\( g_0 \),
  so \( g_0(E_i,E_i) = t_i > 0 \).  Then the matrix \( (z_{ij}) \) of
  \( Z_r \) satisfies \( t_iz_{ji} + t_jz_{ij} = 0 \) so \( z_{ii} = 0
  \).  Putting \( e_i = (E_i)_\varphi \) and \( e_n =
  \xi/g(\xi,\xi)^{1/2} \), we obtain an orthonormal basis for all of
  \( \m \).  For \( i=1,\dots,n-1 \), we have that \( S^r_{e_i}e_i
  \) is \( g(\xi,\xi)^{-1}\xi \) multiplied by the factor \(
  h_r(E_i,E_i) = g((Z_r(e_i)_\varphi,E_i) = g_{\varphi}(Z_r(e_i),e_i)
  = z_{ii} = 0 \), so \( S^r_{e_i}e_i = 0 \) in these cases.  Moreover,
  \( S^r_\xi\xi = 0 \), and thus we have the claimed vanishing of the
  \( (1,2) \)-trace of \( S^r \).

  For the remaining terms \( \sum_{i=1}^n [e_i',e_i] \) of the \(
  (1,2) \)-trace of \( S^2 \) we choose a different basis \( e_i \).
  Write \( \n = V_{\hol} + \ker\varphi|_{\n} \), in such a way that
  these are \( \h \)-modules whose images in \( \m \) are orthogonal.
  Choose a compatible orthonormal basis \( e_i \) for \( \m \) with \(
  e_i = X_i + \varphi(X_i) \), \( i=1,\dots,n-1 \), such that \( X_i
  \in V_{\hol} \), \( i=1,\dots,k \), and \( X_j \in \ker\varphi|_{\n}
  \), \( j=k+1,\dots,n-1 \) and with \( e_n \) proportional to \( \xi
  \).  Then for \( i=1,\dots,n-1 \) we have
  \begin{equation*}
    [e_i',e_i] = [\varphi(X_i),X_i+\varphi(X_i)] = [\varphi(X_i),X_i].
  \end{equation*}
  This is clearly zero for \( i=k+1,\dots,n-1 \).  For \( i=1,\dots,k
  \), the fact that \( V_{\hol} \) is an \( \h \)-module implies \(
  [\varphi(X_i),X_i] \in V_{\hol} \), so \( [e_i',e_i] =
  \psi^{-1}(\varphi[\varphi(X_i),X_i]) =
  \psi^{-1}[\varphi(X_i),\varphi(X_i)]=0 \).  Finally \( [e_n',e_n] \)
  is proportional to \( [A_0,\xi] = [A_0,A] = 0 \).  Thus in all cases
  \( [e_i',e_i] = 0 \) and \( S^2 \) has no \( \T_1 \) component.

  To see when \( S^2 \) is in \( \T_3 \), consider
  \begin{equation*}
    \begin{split}
      \begin{split}
        S^2_2(B,C)
        &\coloneqq S^2_BC+S^2_CB \\
        &= [C',X_B] + [B',X_C] -
        (Z_r^*(\lambda_BX_C+\lambda_CX_B))_\varphi \eqbreak +
        (h_r(B,C)+h_r(C,B))g(\xi,\xi)^{-1}\xi,
      \end{split}
    \end{split}
  \end{equation*}
  which is proportional to its projection to \( \T_2 \).  For \( S^2
  \) to belong to \( \T_3 \) we need this expression to be zero for
  all \( B \) and \( C \).  First, consider \( C = \xi \) and \( B \)
  orthogonal to \( \xi \), then \( S^2_2(B,\xi) = [\varphi A,X_B] -
  (Z_r^*X_B)_\varphi = 0 \).  This implies that \( g(S^2_2(B,\xi),D) =
  g_\varphi(X_B,[A_0,X_D]) = 0 \), but the representation of \(
  \so(n-1) \) on \( \n \) is faithful, so \( A_0 = 0 \).  Thus \( \m
  \) and hence \( \g \) contains \( \la \) and we may take \( \ls_r =
  \ls \), giving \( A_1 = 0 \) and hence \( \varphi A = 0 \).  We now
  have \( S^2_2(B,C) = [C',X_B] + [B',X_C] \).  Second, suppose \(
  X_B\in\ker\varphi \), then we must have \(
  [C',X_B]=[\varphi(C_{\ls}),X_B]=0 \) for all \( C_{\ls}\in\ls \).
  Thus \( S\in \T_{1+3} \) requires \( \ker\varphi \) to be a trivial
  \( \hol \)-module.  By the proof of Theorem\nobreakspace \ref {thm:holm}, this is
  the case if and only if \( \hol \) is semi-simple and \( \n =
  V_{\hol} + \bR^s \), with the trivial module \( \bR^s \) lying in \(
  \ker \varphi \).  Moreover, in this situation, if \( B,C \) have \(
  X_B,X_C\in V_{\hol} \) then \( S^2_2(B,C) = [\varphi(X_C),X_B] +
  [\varphi(X_B),X_C] = \psi^{-1}([C',B']+[B',C']) = 0 \), so \( S \in
  \T_{1+3} \).

  Furthermore, the \( \T_3 \)-component is non-zero exactly when \(
  \hol \) is non-trivial.  Indeed, in general the \( \T_3 \)-component
  is proportional to
  \begin{equation*}
    \begin{split}
      S^2_3(B,C) &\coloneqq S^2_BC-S^2_CB \\
      &= [B',C]-[C',B] + (Z_r(\lambda_BX_C-\lambda_CX_B))_\varphi.
    \end{split}
  \end{equation*}
  Suppose this tensor~\( S^2_3 \) is zero.  Considering \( B = \xi \)
  and \( C \) orthogonal to \( \xi \), we have \( S^2_3(\xi,C) =
  [\varphi A,C] + Z_r(X_C)_\varphi = [A_0,X_C]_\varphi \), since \(
  [C',A] = 0 = [C',\varphi A] = [C',A_1] \) as \( C' \in \h \).  This
  gives \( A_0 = 0 \), and we may write
  \begin{equation*}
    S^2_3(B,C) =  2[B',C'] + [B',X_C] - [C',X_B].
  \end{equation*}
  For general \( B,C \), the component of \( S^2_3(B,C) \) in \( \h \)
  is \( 2[B',C'] \).  This implies that \( \hol \) is Abelian.
  Finally for \( X_C \in \ker\varphi \) and \( \lambda_C = 0 \), we
  have \( S^2_3(B,C) = [B',X_C] \), so \( \hol \) acts trivially on \(
  \ker\varphi \).  By the proof of Theorem\nobreakspace \ref {thm:holm} we conclude
  that \( \hol = 0 \).  Thus the \( \T_3 \)-component is zero exactly
  when \( \varphi = 0 \).
\end{proof}

\section{Consequences}
\label{sec:consequences}

Our description of splittings via graphs in
Section\nobreakspace \ref {sec:holonomy-algebras} yields the following statement.

\begin{theorem}
  For \( n>1 \), the moduli space of homogeneous tensors on~\( \RH(n)
  \) with fixed scalar curvature, consists of two connected
  components.
\end{theorem}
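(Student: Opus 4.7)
The plan is to identify the two components directly: the singleton \( \{[0]\} \) coming from the unique symmetric description of \( \RH(n) \), and the set \( \mathcal{N} \) of all classes of non-zero homogeneous tensors.  I will show (i) \( \mathcal{N} \) is bounded away from \( [0] \), and (ii) every class in \( \mathcal{N} \) is path-connected to the (unique) class \( [S_{\mathrm{solv}}] \) of the type-\( \T_1 \) tensor realised by the solvable description \( \RH(n) = \bR_{>0}N \).

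\emph{Separation.}  The squared norm \( \norm{S}^2 \) at a point is an isometry invariant, so it defines a continuous function on the moduli space.  By Theorem\nobreakspace\ref{thm:type}, any non-zero \( S \) decomposes orthogonally as \( S = S^1 + S^2 \) with \( S^1 \neq 0 \) in \( \T_1 \) and \( S^2 \) in \( \T_{2+3} \).  A direct trace calculation from \eqref{eq:S1}, performed in an orthonormal basis whose last vector is \( \xi/\sqrt{g(\xi,\xi)} \), yields
\begin{equation*}
  \norm{S^1}^2 = \frac{2(n-1)}{g(\xi,\xi)}.
\end{equation*}
Because \( \varphi A \in \h \) and \( A_1 = \chi_r A - A \in \h_0 \) both fix \( p \), the tangent vector at \( p \) represented by \( \xi \in \m \) coincides with the fundamental vector field of the generator \( A \in \la \), so \( g(\xi,\xi) \) depends only on the hyperbolic metric.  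Hence \( \norm{S}^2 \geqslant \norm{S^1}^2 \geqslant c > 0 \) on all of \( \mathcal{N} \), where \( c \) depends only on the fixed scalar curvature.

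\emph{Connectedness of \( \mathcal{N} \).}  Since \( [\h,\la]=0 \) we may take \( \la_r = \la \) and \( \chi_r = \mathrm{id} \) throughout; then every reductive complement \( \m \) to \( \h \) in \( \g \) is the graph of a unique \( \h \)-equivariant map \( \varphi\colon\ls\to\h \).  For \( t\in[0,1] \), the scaled map \( \varphi_t \coloneqq t\varphi \) remains \( \h \)-equivariant, and its graph \( \m_t \) is an \( \h \)-invariant complement to \( \h \) in \( \g \).  The canonical connection \( \Nt_t \) of the reductive space \( (G/H,\m_t) \) is thus well defined for the same \( G \)-invariant metric \( g \), and by \eqref{eq:Sc1} the associated tensors \( S_t = \LC - \Nt_t \) depend polynomially on \( t \), giving a continuous path of \( G \)-invariant tensor fields on \( (\RH(n),g) \).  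At \( t=1 \) one recovers \( S \); at \( t=0 \), since \( \varphi_0 = 0 \) forces \( B' = 0 \) and \( h_r = 0 \), \eqref{eq:Sc1} collapses to \( S_0 = S^1 \), which by the Tricerri-Vanhecke characterisation recalled before Theorem\nobreakspace\ref{thm:type} is precisely the type-\( \T_1 \) tensor \( S_{\mathrm{solv}} \), unique up to isometry.  Projecting to the moduli space produces a continuous path from \( [S] \) to \( [S_{\mathrm{solv}}] \).

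The main technical point is to confirm that every \( S_t \) along the deformation really is a homogeneous tensor for the \emph{fixed} Riemannian manifold \( (\RH(n),g) \).  This follows because, for each \( t \), the triple \( (G,H,\m_t) \) is a reductive homogeneous description of the same space with the same \( G \)-invariant metric, so the Ambrose-Singer equations~\eqref{eq:AS} hold automatically for \( S_t \).  Combining (i) and (ii), \( \{[0]\} \) is open and closed, and \( \mathcal{N} \) is open, closed and path-connected, giving exactly two components.
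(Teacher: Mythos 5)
Your route is genuinely different from the paper's, and the new half of it is attractive. For the separation step the paper does not use a norm bound at all: it realises the set of homogeneous tensors as a real algebraic variety in \( \bR^N \), uses Nash functions to reduce to analytic paths, and shows that any analytic path through \( S=0 \) has vanishing Taylor expansion because \( U\otimes\Lambda^2U \) contains no \( \SO(n) \)-invariant submodule for \( n\ne3 \), with a separate counting argument for \( n=3 \). Your alternative --- that the \( \T_1 \)-part of any non-zero \( S \) is exactly \eqref{eq:S1}, that \( g(\xi,\xi) \) equals the squared length at the base point of the Killing field generated by \( A \) (since \( A_1 \) and \( \varphi A \) lie in the isotropy algebra), hence is a constant fixed by the scalar curvature, so that \( \norm{S}^2\geqslant\norm{S^1}^2=2(n-1)/g(\xi,\xi)>0 \) uniformly --- is more elementary and the computation is right. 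Note, however, that it leans entirely on Theorem\nobreakspace \ref{thm:type} (every non-zero homogeneous tensor has \( \T_1 \)-part of the specific form \eqref{eq:S1}), which presupposes that every non-zero \( S \) is captured by the \( F_rN \)/graph formalism of Section\nobreakspace \ref{sec:holonomy-algebras}; the paper's Nash-path argument is independent of that classification for \( n\ne 3 \), which is one reason the authors go to that trouble.

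The genuine gap is in your connectedness step. You assert that since \( [\h,\la]=0 \) one may take \( \la_r=\la \) and \( \chi_r=\mathrm{id} \). This is not available: \( \f_r \) is a subalgebra of \( \so(n-1)\oplus\la \) that merely projects non-trivially onto \( \la \), and in general \( \la\not\subset\f_r \) --- take \( F_r \) to be the graph of a non-trivial homomorphism \( \bR_{>0}\to\SO(n-1) \), so that \( \la_r=\bR(A+A_1) \) with \( A_1\ne0 \). In that case the terms \( h_r(B,C)=g([A_1,X_B]_\varphi,C) \) in \eqref{eq:Sc1} do not involve \( \varphi \) at all, so your scaling \( \varphi_t=t\varphi \) does not collapse \( S \) to \( S^1 \) at \( t=0 \): the limit retains a non-zero contribution from \( Z_r=\ad(A_1)|_{\n} \), and the endpoint of your path is not \( S_{\mathrm{solv}} \). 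To repair this you must deform the pair \( (\la_r,\varphi_r) \) simultaneously, for instance replacing \( \la_r \) by \( \bR(A+tA_1) \) while scaling \( \varphi_r \) by \( t \), and verify that \( \h+\bR(A+tA_1)+\n \) remains a reductive description of \( \RH(n) \) for every \( t \) (it does, since \( [A_1,\h]=0 \) and \( [A+tA_1,\n]\subset\n \)). The paper's own statement of this homotopy is a single sentence and is open to the same objection, but your explicit reduction to \( \chi_r=\mathrm{id} \) is false rather than merely unproved, and the additional deformation of \( A_1 \) to \( 0 \) is needed before either version of the connectedness argument is complete.
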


\begin{proof}
  Any non-zero \( S \) is homotopic to the \( S \) of type \( \T_1 \)
  on \( AN \) via a scaling of \( \varphi \) to~\( 0 \).  So there are
  at most two components in the moduli space.  We need to show that \(
  \{ S = 0 \} \) is a separate component.

  We have \( \RH(n) = \SO(n,1)/\Ort(n) \), \( \SO_0(n,1) = KAN \) and
  Theorem\nobreakspace \ref {thm:structure} tells us that \( A_rN \) acts transitively on
  any homogeneous description of~\( \RH(n) \).  Now \( A_rN \) is
  isomorphic to \( AN \) as a group, and any metric on \( AN \) is
  hyperbolic, indeed the isomorphism may be chosen to be an isometry
  of left-invariant metrics, cf.~\cite{Castrillon-Lopez--GS:RHn}.
  With fixed scalar curvature, we may assume that this is an isometry
  to one fixed choice of left-invariant hyperbolic metric on~\( AN \).
  If \( S \) is a homogeneous tensor on \( \RH(n) \) it gives a
  left-invariant tensor on \( A_rN \) and hence on \( AN \).  The
  equation \( \Nt S = 0 \) may be rewritten as
  \begin{equation}
    \label{eq:LCS}
    \LC S = S.S ,
  \end{equation}
  where \( (S_X.S)_YZ = S_X(S_YZ)-S_{S_XY}Z - S_Y(S_XZ) \), and \( \LC
  \) is the Levi-Civita connection.  On the set of left-invariant
  tensors on~\( AN \), equation~\eqref{eq:LCS} is a set of polynomial
  equations in the components of~\( S \).  We thus see that the set of
  homogeneous tensors for \( \RH(n) \) may be regarded as a real
  algebraic variety~\( \mathscr S \) in \( \bR^n\otimes\Lambda^2\bR^n
  = \bR^N \).  The moduli space is a quotient of \( \mathscr S \) by
  the relation of isomorphism of homogeneous structures; in particular
  tensors \( S \) with different holonomy groups give rise to
  different points of the moduli space.  Once we have shown that \( \{
  S = 0 \} \) is a separate component of \( \mathscr S \), we will
  have that the components of \( \mathscr S \) are preserved by the
  equivalence relation and so give distinct components of the moduli
  space.

  Now for any real algebraic variety~\( \mathscr S \subset \bR^N \)
  and any point~\( S \) in \( \mathscr S \) there is an analytic path
  \( S_t \), \( t\in [0,1] \), with \( S_0 = S \) and \( S_t \ne S \),
  for \( t\in (0,1] \).  Indeed such a path may be taken to be a Nash
  function, see \textcite[Proposition
  8.1.17]{Bochnak-CR:real-algebraic}.  Combining this with
  \cite[Définition et Proposition 2.5.11]{Bochnak-CR:real-algebraic}
  one has that the connected components of \( \mathscr S \) are
  analytically path-connected.

  Suppose \( S_t \) is an analytic path of homogeneous structures with
  \( S_0 = 0 \).  Then equation~\eqref{eq:LCS}, gives that for \( \dot
  S = dS_t/dt |_{t=0} \), we have \( \LC \dot S = \dot S.S_0 + S_0.\dot
  S = 0 \), so \( \dot S \) is parallel for the Levi-Civita
  connection.  But any parallel tensor on \( \RH(n) \) is holonomy
  invariant.  

  The holonomy representation of \( \LC \) on the tangent space of \(
  \RH(n) \) is \( U = \bR^n \) as the standard representation of \(
  \SO(n) \).  The tensor \( \dot S \) lies in
  \begin{equation*}
    U \otimes \Lambda^2 U \cong U \oplus \Lambda^3U \oplus W,
  \end{equation*}
  with \( W \) an irreducible representation of \( \SO(n) \) of
  dimension \( \tfrac13n(n-2)(n+2) \).  This decomposition contains an
  invariant submodule only when \( n=3 \).  So for \( n\ne 3\), we
  conclude that \( \dot S = 0 \).

  We may repeat this argument for the higher derivatives of \( S_t \)
  at \( t=0 \).  When \( n\ne3 \), this gives that \( S_t \) has
  Taylor expansion \( 0 \) around \( t=0 \), and thus that \( S_t \)
  is the constant path.  So for \( n\ne3 \), we have that \( \{ S = 0
  \} \) is a connected component of the moduli space.

  For \( n=3 \), we may argue more directly.  By Theorem\nobreakspace \ref {thm:holm}, the
  holonomy algebras of homogeneous connections on \( \RH(3) \) are \(
  \so(3) \) and \( 0 \), since the only other possibility is \(
  \so(n-1) = \so(2) \), which is Abelian, but has \( 3\dim\so(2) = 3 >
  n-1 = 2 \).  Thus there are only two homogeneous structures on \(
  \RH(3) \), one with \( S=0 \), and the other of type \( \T_1 \) by
  Theorem\nobreakspace \ref {thm:type}.  For structures of type \( \T_1 \), the tensor \( S
  \) is the \( S^1 \) of equation~\eqref{eq:S1}.  The scalar curvature
  of the corresponding metric is determined by \( \norm \xi^2 \), so
  for fixed scalar curvature, there is no path to \( S=0 \), and the
  moduli space again has two components.
\end{proof}

\noindent
Note that the final of the proof part confirms the determination of
homogeneous structures on \( \RH(3) \) by
\textcite{Tricerri-V:homogeneous}.

The proof of the main Theorem\nobreakspace \ref {thm:holm} yields the following information
about the action of the holonomy group.

\begin{corollary}
  Suppose \( \Nt \) is a homogeneous canonical connection on \( \RH(n)
  \) whose holonomy algebra \( \hol \) is not \( \so(n) \).  Then the
  isotropy representation of \( \hol \) on \( \m \) contains at least
  three disjoint modules: the first isomorphic to \( \hol \), the
  second an effective representation of the centre of \( \hol \) and
  the third a one-dimensional trivial module.  If \( \hol \) is
  semi-simple the second module is zero.
\end{corollary}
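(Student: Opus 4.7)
The plan is to read off the required decomposition directly from the construction carried out in the proof of Theorem~\ref{thm:holm}, reorganised from the viewpoint of the isotropy action on $\m$ rather than on $\n$. Recall that $\m$ is isomorphic as an $\h$-module to $\la_r + \n$, where the identification sends $X \in \n$ to $X_\varphi = X + \varphi(X) \in \m$ and where $\la_r$ is one-dimensional. Since the holonomy algebra $\hol$ is a subalgebra of $\h$ acting trivially on $\la_r$, the first module to extract is the one-dimensional trivial summand $\la_r$.

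Next, I would invoke the analysis of $\n$ given in the proof of Theorem~\ref{thm:holm}. There it is shown that $\n$ splits, as a metric $\lk$-module with $\lk = \hol = \lk_0 + \lk_{ss}$, in the form
\begin{equation*}
  \n = V_{\lk} + V_1 + \bR^m,
\end{equation*}
where $V_{\lk}$ carries the adjoint representation of $\lk$ (so is isomorphic as a $\lk$-module to $\hol$ itself), $V_1$ is a minimal effective metric representation of the centre $\lk_0 \cong \bR^r$ of dimension $2r$ on which $\lk_{ss}$ acts trivially, and $\bR^m$ is a trivial $\lk$-module. Transporting this decomposition to $\m$ via $X \mapsto X_\varphi$, one obtains three pairwise disjoint $\hol$-submodules $V_{\lk}$, $V_1$, $\la_r$ (together with the trivial module $\bR^m$), giving exactly the claimed structure.

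The final observation is that when $\hol$ is semi-simple one has $\lk_0 = 0$, so $r=0$ and $V_1$ has real dimension $2r = 0$; hence the second module is zero in that case, as stated.

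No serious obstacle arises: the only thing to check is that the construction in Theorem~\ref{thm:holm} is in fact the only shape such a decomposition can take, but this is precisely what the first half of that proof establishes (any holonomy algebra $\hol \ne \so(n)$ forces $\n$ to contain an $\hol$-submodule isomorphic to $\hol$ plus a faithful representation of $\lk_0$ of dimension at least $2r$), so nothing further is required beyond quoting it.
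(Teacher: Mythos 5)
Your proposal is correct and is essentially the paper's own argument: the paper's proof simply states that, in the notation of the proof of Theorem~\ref{thm:holm}, the three modules are \( V_{\hol} \), \( V_1 \) and \( \la_r \), which is exactly the identification you make (including the observation that \( V_1 \) has dimension \( 2r = 0 \) when \( \hol \) is semi-simple).
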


\begin{proof}
  In the notation of the proof of Theorem\nobreakspace \ref {thm:holm} the three modules
  are \( V_{\hol} \), \( V_1 \) and \( \la_r \).
\end{proof}

Let us regard a geometric structure as any collection of tensors
preserved by some connection, not necessarily torsion-free.  We say
this geometry is homogeneous if can be realised on a reductive
homogeneous space with the connection being the canonical connection.

\begin{corollary}
  Any homogeneous geometry on \( \RH(n) \) that is not invariant under
  the connected isometry group \( SO_0(n,1) \), admits a nowhere
  vanishing parallel vector field. \qed
\end{corollary}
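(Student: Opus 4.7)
The plan is to combine Theorem~\ref{thm:structure} with the previous corollary via the holonomy principle. Let the given homogeneous geometry be realised on a reductive homogeneous space \( G/H \) with canonical connection~\( \Nt \). I would first rule out \( G = \SO_0(n,1) \): in that case \( G/H \) is the symmetric space description and \( \Nt = \LC \); the restricted holonomy is \( \SO(n) \), which coincides with the isotropy in \( \RH(n) = \SO_0(n,1)/\SO(n) \), so \( \Nt \)-parallel tensors are exactly the \( \SO_0(n,1) \)-invariant ones, contradicting the hypothesis. Hence Theorem~\ref{thm:structure} forces \( G = F_rN \), and consequently \( \hol \leqslant \h \leqslant \so(n-1) \); in particular \( \hol \neq \so(n) \).

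The previous corollary now supplies a one-dimensional trivial \( \hol \)-summand \( \la_r \) of the isotropy representation on \( \m = T_{eH}\RH(n) \).  Picking any nonzero \( \xi \in \la_r \), the vector \( \xi \) is fixed by the connected holonomy group of \( \Nt \) at \( eH \); since \( \RH(n) \) is simply connected, the holonomy principle extends \( \xi \) uniquely to a \( \Nt \)-parallel vector field \( X \) on \( \RH(n) \). The Ambrose-Singer equation \( \Nt g = 0 \) from~\eqref{eq:AS} makes parallel transport for \( \Nt \) an isometry, so \( \norm{X} \) is constant and equal to \( \norm{\xi} > 0 \); hence \( X \) is nowhere vanishing, and being \( \Nt \)-parallel it may be adjoined to the collection of tensors constituting the homogeneous geometry.

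The argument is essentially an assembly of results already established, and I do not anticipate any substantial obstacle. The only step that requires explicit care is the initial exclusion of the symmetric space realisation, which rests on the coincidence of the restricted holonomy of \( \LC \) with the connected isotropy \( \SO(n) \) in the presentation \( \RH(n) = \SO_0(n,1)/\SO(n) \); everything else is a direct application of the preceding corollary and of the standard correspondence between holonomy-invariant vectors and parallel vector fields on a simply-connected manifold.
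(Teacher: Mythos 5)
Your proposal is correct and follows exactly the argument the paper intends (the corollary is stated with no written proof, the point being that it follows immediately from Theorem\nobreakspace\ref{thm:structure}, the preceding corollary's trivial one-dimensional summand \( \la_r \), and the holonomy principle on the simply-connected \( \RH(n) \)). Your explicit exclusion of the symmetric description via \( \Nt=\LC \) and the constancy of \( \norm{X} \) from \( \Nt g=0 \) are the right details to supply.
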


\printbibliography

\bigskip
\begin{small}
  \setlength{\parindent}{0pt} Marco Castrillón López

  Departamento de Geometría y Topología, Facultad de Matemáticas,
  Universidad Complutense de Madrid, Av.\ Complutense s/n,
  28040--Madrid, Spain.

  \textit{E-mail}: \url{mcastri@mat.ucm.es}

  \medskip Pedro Martínez Gadea

  Instituto de Física Fundamental, CSIC, Serrano 113-bis,
  28006--Madrid, Spain.

  \textit{E-Mail}: \url{pmgadea@iff.csic.es}

  \medskip Andrew Swann

  Department of Mathematics, Aarhus University, Ny
  Munkegade 118, Bldg 1530, DK-8000 Aarhus C, Denmark.

  \textit{and}

  CP\textsuperscript3-Origins, Centre of Excellence for Particle
  Physics Phenomenology, University of Southern Denmark, Campusvej 55,
  DK-5230 Odense M, Denmark.

  \textit{E-mail}: \url{swann@imf.au.dk}
\end{small}

\end{document}